\newtheorem{theorem}{Theorem}
\newtheorem{prop}{Proposition}
\newtheorem{lemma}{Lemma}
\newtheorem{false statement}{False statement}
\theoremstyle{definition}
\newtheorem{claim}{Claim}
\newtheorem{conjecture}{Conjecture}
\newtheorem{problem}{Problem}
\newtheorem{case}{Case}
\newtheorem{subcase}{Case}[case]
\newtheorem{subsubcase}{Case}[subcase]
\newcounter{mathitem}
\begin{document}

\title{\bf\Large A note on heterochromatic cycles of length 4 in edge-colored graphs\thanks{Supported by NSFC
(No.~10871158).}}

\date{}

\author{Bo Ning and Shenggui Zhang\thanks{Corresponding author. E-mail address: sgzhang@nwpu.edu.cn (S. Zhang).}    \\[2mm]
\small Department of Applied Mathematics,  \small Northwestern Polytechnical University,\\
\small Xi'an, Shaanxi 710072, P.R.~China\\} \maketitle

\begin{abstract}
Let $G$ be an edge-colored graph. A heterochromatic cycle of $G$ is one in which every two edges have different colors. For a vertex $v\in V(G)$, let $CN(v)$ denote the set of colors which are assigned to the edges incident to $v$. In this note we prove that $G$ contains a heterochromatic cycle of length 4 if $G$ has $n\geq 60$ vertices and $|CN(u)\cup CN(v)|\geq n-1$ for every pair of vertices $u$ and $v$ of $G$. This extends a result of Broersma et al. on the existence of heterochromatic cycles of length 3 or 4.

\medskip
\noindent {\bf Keywords:} edge-colored graphs; heterochromatic cycles
\smallskip
\end{abstract}

\section{Introduction}

We use Bondy and Murty \cite{Bondy_Murty_0} for terminology and
notation not defined here and consider finite simple graphs only.

Let $G=(V,E)$ be a graph, $H$ a subgraph, and $v$ a vertex of $G$. We use $N_H(v)$ to denote the set, and $d_H(v)$ the number, of neighbors of $v$ in $H$, and call $d_H(v)$ the \emph{degree} of $v$ in $H$. An \emph{edge-coloring} of $G$ is a function $C : E\longrightarrow \mathbb N^{+}$, where $\mathbb N^{+}$ is the set of positive integers. We call $G$ an \emph{edge-colored graph} if
it is assigned such a coloring $C$, and denote this edge-colored graph by $(G,C)$. We use $CN_{H}(v)$ to denote the set of colors of the edges joining $v$ and its neighbors in $H$ when $v\notin V(H)$ and those incident to $v$ in $H$ when $v\in V(H)$, and call $d^{c}_{H}(v)=|CN_{H}(v)|$ the \emph{color degree} of $v$ in $H$. When no confusion occurs, we use $G$ instead of $(G,C)$, and $N(v)$, $d(v)$, $CN(v)$ and $d^{c}(v)$ instead of $N_G(v)$, $d_G(v)$, $CN_{G}(v)$ and $d^{c}_{G}(v)$, respectively.

A subgraph $H$ of an edge-colored graph $G$ is called \emph{heterochromatic} if every two of its edges have different colors. Recently, heterochromatic subgraphs in edge-colored graphs have received much attentions. Heterochromatic matchings were studied in \cite{Lesaulnier_Stocker_Wenger_West,Wang,Wang_Li}. Chen and Li \cite{Chen_Li_0,Chen_Li_5} studied long heterochromatic paths in edge-colored graphs. Albert et al. \cite{Albert_Frieze_Reed} studied heterochromatic Hamilton cycles in edge-colored complete graphs. Heterochromatic cycles of small lengths were studied in \cite{Broersma_Li_Woegingerr_Zhang,Li_Wang,Wang_Li_Zhu_Liu,Zhu}. For a survey on the study of heterochromatic subgraphs in edge-colored graphs, we refer to \cite{Kano_Li}.

In the following we use $C_k$ to denote a cycle of length $k$. Broersma et al. \cite{Broersma_Li_Woegingerr_Zhang} considered the existence of short heterochromatic cycles in edge-colored graphs and got the following result.

\begin{theorem}[Broersma, Li, Woeginger and Zhang \cite{Broersma_Li_Woegingerr_Zhang}]\label{t1}
Let $G$ be an edge-colored graph of order $n\geq 4$ such that $|CN(u)\cup CN(v)|\geq n-1$ for every pair of vertices $u$ and $v$ of $G$. Then $G$ contains a heterochromatic $C_3$ or a heterochromatic $C_4$ .
\end{theorem}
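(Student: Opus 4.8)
The plan is to argue by contradiction: assume $G$ contains neither a heterochromatic $C_3$ nor a heterochromatic $C_4$, and derive a contradiction with the hypothesis $|CN(u)\cup CN(v)|\ge n-1$. First I would record the trivial consequence $d^{c}(u)+d^{c}(v)\ge|CN(u)\cup CN(v)|\ge n-1$ for every pair, so that color degrees are large throughout. Fix a vertex $v$, set $k=d^{c}(v)$, and choose a \emph{rainbow star} at $v$: one edge of each color of $CN(v)$, giving neighbours $u_{1},\dots,u_{k}$ with $C(vu_{i})=c_{i}$ and $c_{1},\dots,c_{k}$ pairwise distinct. Put $W=V\setminus\{v,u_{1},\dots,u_{k}\}$, so $|W|=n-1-k$. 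The argument then splits according to whether some vertex has color degree at most $n-3$ (so that $v$ may be chosen with $|W|\ge 2$) or every vertex has color degree at least $n-2$ (so that $k\in\{n-2,n-1\}$ and $|W|\le 1$).

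\emph{Main case ($|W|\ge 2$).} Absence of a heterochromatic triangle $vu_{i}u_{j}v$ forces every present edge $u_{i}u_{j}$ to be colored $c_{i}$ or $c_{j}$, so every color of $CN(u_{i})$ outside $\{c_{1},\dots,c_{k}\}$ must occur on an edge from $u_{i}$ into $W$. Applying $|CN(v)\cup CN(u_{i})|\ge n-1$ then gives $u_{i}$ at least $n-1-k=|W|$ such ``new'' colors, so $u_{i}$ is joined to every vertex of $W$ by edges of $|W|$ distinct new colors. Next, absence of a heterochromatic $C_4$ of type $vu_{i}xu_{j}v$ forces, for each fixed $x\in W$, all edges $u_{i}x$ to share one color $\gamma_{x}$, and the $\gamma_{x}$ are pairwise distinct over $x\in W$. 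Finally, applying the hypothesis to each pair $(v,x)$ with $x\in W$ shows every $x$ is joined to all of $W\setminus\{x\}$ by distinct new colors, each different from $\gamma_{x}$. The contradiction is now immediate: for distinct $x,y\in W$ the triangle $u_{1}xy$ has edge colors $\gamma_{x}$, $\gamma_{y}$, $C(xy)$, and these are pairwise distinct (the $\gamma$'s are distinct, while $C(xy)$ is a within-$W$ color at $x$ and at $y$, hence differs from both $\gamma_{x}$ and $\gamma_{y}$), giving a heterochromatic $C_3$.

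\emph{Boundary cases, and the main obstacle.} Here every edge-color of $G$ lies in $\{c_{1},\dots,c_{k}\}$ together with, when $|W|=1$, the single color $\gamma$ borne by every edge from the $u_{i}$ to the lone vertex of $W$; in either subcase the palette has exactly $n-1$ colors, so $|CN(u_{i})\cup CN(u_{j})|\ge n-1$ forces this union to be the \emph{whole} palette for every pair. From this I would show each $u_{i}$ misses at most one color: if $c_{p},c_{q}$ were both absent from $CN(u_{i})$, they would lie in $CN(u_{j})$ for every $j\neq i$, which (edges among the $u$'s being colored by an endpoint's star-color) forces $u_{q}u_{p}$ to be colored $c_{p}$ and simultaneously $u_{p}u_{q}$ to be colored $c_{q}$, an absurdity. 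Hence $d^{c}(u_{i})\ge n-2$, so $u_{i}$ meets at least $k-2$ of the other $u_{j}$ along an edge bearing that other endpoint's color; since each edge $u_{i}u_{j}$ is counted this way for exactly one of its ends, double counting yields $k(k-2)\le\binom{k}{2}$, i.e.\ $k\le 3$, impossible for the relevant values of $k$, and the finitely many smallest orders $n$ are checked by hand. I expect this last step — converting the union hypothesis into the out-degree/counting bound and disposing of the smallest $n$ — to be the main technical obstacle, the principal case being comparatively clean.
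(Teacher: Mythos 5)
A preliminary remark: the paper contains no proof of this statement at all — Theorem~\ref{t1} is quoted from Broersma, Li, Woeginger and Zhang \cite{Broersma_Li_Woegingerr_Zhang}, and the paper only proves Lemma~\ref{t7} and Theorem~\ref{t6}. So your proposal cannot be checked against an in-paper proof; what I can do is assess it on its merits and set it beside the machinery the paper does develop. Your main case ($|W|\ge 2$) is correct and essentially complete: the chain ``$u_iu_j$ colored $c_i$ or $c_j$'' $\Rightarrow$ ``$u_i$ joined to all of $W$ by $|W|$ distinct new colors'' $\Rightarrow$ ``common color $\gamma_x$ at each $x\in W$, distinct over $x$'' $\Rightarrow$ ``each $x$ joined to $W\setminus\{x\}$ by new colors avoiding $\gamma_x$'' $\Rightarrow$ ``$u_1xy$ is a heterochromatic $C_3$'' is sound. (You silently use $k\ge 1$; this is harmless, since an isolated $v$ would force $d^c(u)\ge n-1>d(u)$ for every other $u$.) The boundary case is also correct as far as it goes: the palette has exactly $n-1$ colors, your $c_p/c_q$ clash correctly shows each $u_i$ misses at most one color, and $k(k-2)\le\binom{k}{2}$ gives $k\le 3$, contradicting $k\ge n-2$ once $n\ge 6$. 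The one genuine incompleteness is exactly what you flag: ``the finitely many smallest orders are checked by hand'' is an IOU for $n\in\{4,5\}$, which the hypothesis $n\ge 4$ obliges you to cover, namely the configurations $(n,k)\in\{(4,3),(5,3),(4,2)\}$ (the pair $(5,4)$ already dies on $k\le 3$). These checks do succeed, and by your own mechanism: for $k=3$ your count is tight, so all three edges $u_iu_j$ exist and each $u_i$ ``counts'' exactly one of them, forcing a cyclic pattern $C(u_1u_2)=c_2$, $C(u_2u_3)=c_3$, $C(u_3u_1)=c_1$ (or its reverse), which is itself a heterochromatic triangle; for $(4,2)$, first $vx_0\notin E$ (else $vu_ix_0$ is rainbow), whence $CN(x_0)=\{\gamma\}$ and the pairs $(u_1,x_0)$, $(u_2,x_0)$ force $C(u_1u_2)=c_2$ and $C(u_1u_2)=c_1$ simultaneously. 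With those three configurations written out, your proposal is a complete and correct proof.

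As to comparison: your rainbow-star decomposition $\{v\}\cup\{u_1,\dots,u_k\}\cup W$ is essentially the skeleton the paper uses in Case 2 of its proof of Theorem~\ref{t6} (the vertex $w$ of minimum color degree, the rainbow neighborhood $T$, $G_1=G[T\cup\{w\}]$, $G_2$, and the dichotomy on how many ``new'' colors a vertex of $G_2$ sends into $G_1$), and your boundary case corresponds to its Case 1 ($\delta^c(G)=n-1$, i.e.\ $G$ complete and locally rainbow). The difference lies in what each argument is allowed to conclude: because Theorem~\ref{t1} accepts a heterochromatic $C_3$, you can close both cases cheaply with triangles, whereas the paper, needing a $C_4$, must replace your triangle endings by the five-vertex color-identity analysis of its Claim 1 and by the detour through Lemma~\ref{t7} and Zhu's Theorem~\ref{t5} — which is precisely where its restriction $n\ge 60$ comes from.
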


Li and Wang \cite{Li_Wang} gave a result on the existence of a heterochromatic $C_3$ or a heterochromatic $C_4$ under the color degree condition. At the same time, they obtained a result on the existence of a heterochromatic $C_3$ in edge-colored graphs.

\begin{theorem}[Li and Wang \cite{Li_Wang}]\label{t2}
Let $G$ be an edge-colored graph of order $n\geq 3$. If $d^{c}(v)\geq (\frac{4\sqrt{7}}{7}-1)n+3-\frac{4\sqrt{7}}{7}$ for every vertex $v\in V(G)$, then $G$ contains either a heterochromatic $C_3$ or a heterochromatic $C_4$.
\end{theorem}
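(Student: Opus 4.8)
The plan is to argue by contradiction: assume $G$ has neither a heterochromatic $C_3$ nor a heterochromatic $C_4$, and derive that some vertex must then have color degree below the stated threshold. Throughout write $\tau=(\frac{4\sqrt7}{7}-1)n+3-\frac{4\sqrt7}{7}=(\frac{4\sqrt7}{7}-1)(n-1)+2$, so the hypothesis reads $d^c(w)\ge\tau$ for every vertex $w$.

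First I would fix a vertex $v$ of maximum color degree and extract a \emph{rainbow neighbourhood}: a set $S=\{u_1,\dots,u_k\}$ of $k=d^c(v)$ neighbours of $v$ for which the edges $vu_i$ carry pairwise distinct colors $c_1,\dots,c_k$ (one representative edge per color class at $v$). The absence of a heterochromatic triangle through $v$ immediately forces, for every edge $u_iu_j$ inside $S$, that $C(u_iu_j)\in\{c_i,c_j\}$; orienting each such edge toward the endpoint whose color it reuses turns $G[S]$ into a digraph. A short computation then shows that a directed cycle $u_{a_1}\to\cdots\to u_{a_t}\to u_{a_1}$ has edge-colors $c_{a_2},\dots,c_{a_t},c_{a_1}$, i.e.\ is exactly a heterochromatic $C_t$; hence our hypothesis says this digraph has \emph{directed girth at least $5$}, in particular no directed $C_3$ and no directed $C_4$.

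Writing $A=V(G)\setminus(S\cup\{v\})$ with $|A|=n-1-k$, the next step is a color-degree accounting for each leaf. For $u_i\in S$ the only colors available are $c_i$, the colors $c_j$ on out-edges of $u_i$ inside $S$, and the colors on edges from $u_i$ into $A$; thus $d^c(u_i)\le 1+d^{+}(u_i)+m_i$, where $d^{+}(u_i)$ is the out-degree of $u_i$ and $m_i$ counts the distinct colors $\ne c_i$ that $u_i$ sees in $A$. Combined with the hypothesis this yields $d^{+}(u_i)+m_i\ge\tau-1$ for every leaf, so leaves that are near-sinks of the orientation are forced to be very colorful toward $A$.

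The heart of the argument, and the step I expect to be the main obstacle, is to bound the colors going into $A$ using the remaining heterochromatic-$C_4$ restrictions together with the fact that \emph{the vertices of $A$ must themselves satisfy $d^c\ge\tau$}. For a fixed $x\in A$ the cycles $v\,u_i\,x\,u_j$ and $u_i\,x\,u_j\,y$ being non-rainbow translate, after separating the self-colored edges ($C(xu_i)=c_i$) from the rest, into the statement that the non-self edges at $x$ either reuse essentially a single color or split into only boundedly many classes obeying a tournament-like relation; this controls how the colors $c_j$ and the genuinely new colors are distributed between $S$ and $A$. Feeding such bounds into $k\tau\le k+\sum_i d^{+}(u_i)+\sum_i m_i=k+|E(G[S])|+\sum_i m_i$, and balancing against the symmetric requirement that the vertices of $A$ also reach color degree $\tau$, produces a single inequality in the parameter $k=|S|$ (equivalently in the ratio $k/n$). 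Optimizing that inequality—its extremum is governed by a quadratic whose discriminant is what forces the irrational constant—gives the threshold $\frac{4\sqrt7}{7}$ and the desired contradiction. The delicate points will be the exhaustive case analysis needed to make the $S$–$A$ color bound sharp enough (the naive bounds, using only one lowest-out-degree leaf, give merely the weaker constant $\tfrac47$), and tracking the lower-order terms carefully so that the exact value $3-\frac{4\sqrt7}{7}$ emerges.
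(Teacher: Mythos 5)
First, a point about the comparison you asked for: the paper does not prove Theorem~\ref{t2} at all. It is quoted as a known result of Li and Wang \cite{Li_Wang} and used as background, so there is no in-paper proof against which your argument can be checked; any assessment has to be of your proposal on its own terms.

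On those terms, your opening moves are sound and are indeed the standard ones for this kind of problem: fixing a rainbow neighbourhood $S=\{u_1,\dots,u_k\}$ of a vertex $v$, noting that the absence of a heterochromatic $C_3$ forces $C(u_iu_j)\in\{c_i,c_j\}$ for every edge inside $S$, and orienting $G[S]$ so that directed $C_3$'s and $C_4$'s correspond to heterochromatic $C_3$'s and $C_4$'s. The color-degree accounting $d^c(u_i)\le 1+d^{+}(u_i)+m_i$ is also correct. But from that point on what you have written is a research plan, not a proof. The entire quantitative core is missing: you never state what the ``tournament-like relation'' among the edges from $S$ to $A$ actually is, never prove the claimed dichotomy for the non-self-colored edges at a vertex $x\in A$, never write down the final inequality in $k$ whose optimization is supposed to produce the constant $\frac{4\sqrt7}{7}$, and never track the lower-order terms that are supposed to yield exactly $3-\frac{4\sqrt7}{7}$ for all $n\ge 3$. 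You explicitly flag the decisive step as ``the main obstacle'' and the assertion that the naive bound gives only $\tfrac47$ while the refined one gives $\frac{4\sqrt7}{7}$ is offered without any justification. Since the contribution of the $S$--$A$ edges is precisely where all the difficulty of this theorem lives, the proposal as it stands establishes only the easy structural preliminaries and defers the whole argument; it cannot be accepted as a proof of the stated bound.
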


\begin{theorem}[Li and Wang \cite{Li_Wang}]\label{t3}
Let $G$ be an edge-colored graph of order $n\geq 3$. If $d^{c}(v)\geq \frac{\sqrt{7}+1}{6}n$ for every vertex $v\in V(G)$, then $G$ contains a heterochromatic $C_3$.
\end{theorem}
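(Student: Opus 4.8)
The plan is to argue by contradiction: assume $(G,C)$ has no heterochromatic $C_3$ and show that then some vertex has color degree below $\frac{\sqrt7+1}{6}n$. Write $k=\frac{\sqrt7+1}{6}$ and note the algebraic fact that drives everything: $k$ is the positive root of $6k^2-2k-1=0$, i.e. $6k^2=2k+1$. The whole argument should be set up so that the final counting collapses to the inequality $6k^2-2k-1\le 0$, which, against the defining equation of $k$, yields the contradiction.

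First I would fix a vertex $v$ and extract a rainbow star at it: for each color $c\in CN(v)$ pick one edge $vu$ with $C(vu)=c$, giving neighbors $u_1,\dots,u_d$ with pairwise distinct colors $c_i=C(vu_i)$, where $d=d^c(v)\ge kn$. Put $S=\{u_1,\dots,u_d\}$ and $R=V(G)\setminus(\{v\}\cup S)$, so $|R|=n-1-d$. The key local consequence of having no heterochromatic triangle is that every edge $u_iu_j$ of $G[S]$ satisfies $C(u_iu_j)\in\{c_i,c_j\}$, since otherwise $vu_iu_j$ would be rainbow. Setting $b_i=|\{j:u_iu_j\in E(G),\,C(u_iu_j)=c_j\}|$, each edge of $G[S]$ is charged to exactly one $b_i$, so $\sum_{i=1}^d b_i=e(G[S])\le\binom{d}{2}$. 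On the other hand the colors at $u_i$ split into those on edges inside $\{v\}\cup S$, all of which lie in $\{c_i\}\cup\{c_j:C(u_iu_j)=c_j\}$ and so number at most $1+b_i$, and those on edges to $R$, numbering at most $|R|$; hence $kn\le d^c(u_i)\le 1+b_i+(n-1-d)$, that is $b_i\ge d-(1-k)n$.

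Combining these two counts naively gives $d\big(d-(1-k)n\big)\le\binom{d}{2}$, hence $d\le 2(1-k)n-1$, which with $d\ge kn$ forces only $k<\tfrac23$; so this crude version proves the theorem merely under the stronger hypothesis $d^c(v)\ge\tfrac23 n$. The hard part, and the step that actually lowers the threshold to $\frac{\sqrt7+1}{6}$, is to replace the wasteful estimate that $u_i$ sees at most $|R|$ external colors by a sharp one. For this I would exploit the triangles $u_iu_jr$ with $r\in R$: whenever $r$ is joined to two mutually adjacent vertices $u_i,u_j$ of $S$, the set $\{C(ru_i),C(ru_j),C(u_iu_j)\}$ is not rainbow, so a single vertex of $R$ cannot pump many distinct new colors into the dense subgraph $G[S]$. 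Quantifying this simultaneously over all $r\in R$ introduces a second, genuinely quadratic saving in the external contribution, and balancing it against $\sum_i b_i\le\binom{d}{2}$ is precisely where the two competing quadratic terms meet and the optimization should collapse to $6k^2-2k-1\le 0$; the strict slack coming from the $-1$ in $|R|=n-1-d$ (and from $n\ge 3$) then closes the contradiction. I expect this external-color estimate to be the main obstacle: the delicate point is controlling how the colors on $S$--$R$ edges interact at once with the star colors $c_i$ and with the internal colors of $G[S]$, while the rest of the proof is bookkeeping around the identity $6k^2=2k+1$.
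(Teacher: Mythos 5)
The paper itself offers no proof of Theorem~\ref{t3}; it is quoted from Li and Wang, so there is nothing internal to compare you against. Judged on its own terms, your proposal has a genuine gap, and you essentially say so yourself. The part you actually carry out is correct: the rainbow star at $v$, the observation that $C(u_iu_j)\in\{c_i,c_j\}$, the charging $\sum_i b_i=e(G[S])\le\binom{d}{2}$, and the bound $d^c(u_i)\le 1+b_i+|R|$ are all fine. But, as you compute, these only force $k<\tfrac23$, while $\frac{\sqrt7+1}{6}\approx 0.608<\tfrac23$, so this rigorous portion proves the theorem only under the stronger hypothesis $d^c(v)\ge\tfrac23 n$. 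Everything that would bring the threshold down to $\frac{\sqrt7+1}{6}$ is deferred to an unproved ``sharp external-color estimate,'' which you yourself flag as the main obstacle. Identifying $\frac{\sqrt7+1}{6}$ as the positive root of $6k^2-2k-1=0$ tells you what the final inequality must look like, but it does not supply the counting argument that produces it.

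Moreover, the mechanism you sketch for that missing step is not obviously workable. The constraint you propose to exploit --- that for $r\in R$ adjacent to two adjacent vertices $u_i,u_j\in S$ the triple $\{C(ru_i),C(ru_j),C(u_iu_j)\}$ is not rainbow --- only bites when $r$ actually completes a triangle with an edge of $G[S]$. A vertex $r$ joined to $u_i$ but to none of $u_i$'s neighbors inside $S$ is subject to no constraint at all, and even when the constraint applies it limits a pair of edge colors at $r$, not directly the number of distinct colors $u_i$ receives from $R$. You give no aggregation scheme turning these pairwise, triangle-local restrictions into the claimed ``genuinely quadratic saving'' in $|CN_R(u_i)|$, and it is exactly in that aggregation that the optimization yielding $6k^2=2k+1$ would have to live. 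As it stands the proposal establishes a weaker theorem and gestures at, but does not contain, the idea needed for the stated one.
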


Later, Wang et al. \cite{Wang_Li_Zhu_Liu} got a result on the existence of a heterochromatic $C_4$ in edge-colored triangle-free graphs. Zhu \cite{Zhu} further extended this result to edge-colored bipartite graphs.

\begin{theorem}[Wang, Li, Zhu and Liu \cite{Wang_Li_Zhu_Liu}]\label{t4}
Let $G$ be an edge-colored triangle-free graph of order $n\geq 9$. If $d^{c}(v)\geq \frac{3-\sqrt{5}}{2}n+1$ for every vertex $v\in V(G)$, then $G$ contains a heterochromatic $C_4$.
\end{theorem}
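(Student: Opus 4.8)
The plan is to argue by contradiction: suppose $G$ is triangle-free, satisfies $d^c(v)\ge\frac{3-\sqrt5}{2}n+1$ for every $v$, yet contains no heterochromatic $C_4$. First I would fix a vertex $v$ of minimum color degree and write $d=d^c(v)$. For each vertex fix a \emph{rainbow star}, namely one incident edge of each color in its color neighborhood; let $W\subseteq N(v)$ be the leaf set of $v$'s rainbow star, so $|W|=d$, and put $Z=V\setminus(W\cup\{v\})$. Since $G$ is triangle-free, $W$ is independent and every edge leaving $W$ ends in $\{v\}\cup Z$, which is the structural foothold for everything that follows.

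The combinatorial heart is a local constraint forced by the absence of a heterochromatic $C_4$. Fix $z\in Z$ and list its neighbors $w_1,\dots,w_t$ in $W$; set $\delta_i=C(vw_i)$ (pairwise distinct, as they come from a rainbow star) and $\gamma_i=C(zw_i)$. For $i\ne j$ the $4$-cycle $v\,w_i\,z\,w_j$ must fail to be heterochromatic, so two of the colors $\delta_i,\gamma_i,\gamma_j,\delta_j$ coincide. Whenever $\gamma_i\ne\delta_i$ and $\gamma_j\ne\delta_j$, and because $\delta_i\ne\delta_j$, this forces the two-element sets $\{\delta_i,\gamma_i\}$ and $\{\delta_j,\gamma_j\}$ to meet. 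Viewing these pairs as edges of an auxiliary graph on the colors, I obtain a family of $2$-sets that pairwise intersect, and such a family is either a star (all sets share one color $\lambda_z$) or a triangle (at most three sets). Hence for each $z$ the colors on the edges from $z$ into $W$ are essentially canonical: up to a bounded number of exceptions, either almost all of them equal a single color $\lambda_z$, or almost all satisfy $\gamma_i=\delta_i$, that is, they copy the colors of $v$'s star.

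From this structural picture I would extract a numerical bound by estimating the color-incidence sum $\sum_{x\in V}d^c(x)$ two ways. Since $d$ is the minimum color degree, $\sum_{x\in V}d^c(x)\ge nd$. On the other hand, the canonical structure around every vertex says that the edges into any neighborhood introduce only $O(1)$ genuinely new colors per outer vertex beyond those already counted, and the target upper bound is $\sum_{x\in V}d^c(x)\le (n-d)^2+O(n)$. Combining the two and absorbing the additive terms using $n\ge 9$ yields
\[
 d\cdot n\ \le\ (n-d)^2 .
\]
Writing $x=d/n$ this reads $x^2-3x+1\ge 0$, whose only admissible consequence in $[0,1]$ is $x\le\frac{3-\sqrt5}{2}$; equivalently the extremal value obeys the golden relation $x=(1-x)^2$, so that $(n-d)^2=nd$ exactly at the threshold. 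Thus $d\le\frac{3-\sqrt5}{2}\,n$, contradicting $d\ge\frac{3-\sqrt5}{2}n+1$, and the theorem follows.

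I expect the main obstacle to be precisely the passage from the local dichotomy to the clean inequality $d\cdot n\le(n-d)^2$. The delicate points are accounting for the bounded exceptional edges (the ``triangle'' families and the one special edge in each star family) and for the colors copied from $v$'s star, so that neither inflates the color count; controlling the gap between $N(v)$ and the chosen rainbow star $W$; and matching the two occurrences of $d$ in the lower and upper bounds. The real difficulty is sharpness: a crude incidence count only gives $\sum_{x}d^c(x)\le |Z|\cdot d$ and hence the weaker $x\le\frac12$, so the argument must exploit that each outer vertex is \emph{monochromatic} (or star-copying) toward $W$ tightly enough to replace $|Z|\cdot d$ by $(n-d)^2$ and produce the coefficients of $x^2-3x+1$. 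The hypothesis $n\ge 9$ should be exactly what is needed to dominate the accumulated $O(n)$ error terms.
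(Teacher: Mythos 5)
This statement is Theorem~\ref{t4}, a result of Wang, Li, Zhu and Liu which the present note only \emph{cites}; the paper contains no proof of it, so your attempt must stand on its own. Your local analysis is sound and is indeed the standard opening move for such results: if no heterochromatic $C_4$ passes through $v,w_i,z,w_j$, then whenever $\gamma_i\ne\delta_i$ and $\gamma_j\ne\delta_j$ the pairs $\{\delta_i,\gamma_i\}$ and $\{\delta_j,\gamma_j\}$ must intersect, and a pairwise-intersecting family of $2$-sets is a star or a triangle. One small correction: the conclusion is a per-edge disjunction --- up to at most three exceptional indices, every edge $zw_i$ has $\gamma_i\in\{\delta_i,\lambda_z\}$ --- not the global ``either almost all copy $v$'s star or almost all equal $\lambda_z$'' dichotomy you state; this imprecision is harmless for the consequence $|CN_W(z)\setminus CN(v)|\le 4$, which is what you actually need.

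The genuine gap is the counting step, which you yourself flag as the ``main obstacle'': the upper bound $\sum_x d^c(x)\le (n-d)^2+O(n)$ is asserted, not derived, and it does not follow from the structure you established. Your lemma constrains only the colors on edges between $Z$ and $W$ \emph{relative to the single base vertex} $v$; it says nothing about edges inside $Z$, so a vertex $z\in Z$ can have $d^c(z)$ close to $(n-d-1)+d=n-1$ consistently with everything you proved, and no bound of the shape $(n-d)^2$ on the total color degree can be extracted. (Your fallback claim that a crude count gives $\sum_x d^c(x)\le |Z|\cdot d$ cannot be right either: combined with $\sum_x d^c(x)\ge nd$ it would yield $n\le n-d-1$, a contradiction for every $d$, i.e.\ it would prove the theorem with no hypothesis at all.) What the single-vertex argument genuinely gives is only the trivial bound: triangle-freeness forces $N(w_i)\subseteq\{v\}\cup Z$ for each $w_i\in W$, hence $d\le d^c(w_i)\le 1+|Z|=n-d$, i.e.\ $d\le n/2$. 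Passing from $1/2$ to $\frac{3-\sqrt5}{2}$ --- equivalently, establishing something like $nd\le(n-d)^2$ --- is exactly where the theorem's content lies, and it requires an additional idea (running the forbidden-$C_4$ analysis simultaneously from the vertices of $W$, whose $\ge d-1$ remaining colors are all crowded into the $n-d-1$ vertices of $Z$, and reconciling the resulting canonical colorings). As written, the proposal is incomplete at its decisive step.
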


\begin{theorem}[Zhu \cite{Zhu}]\label{t5}
Let G be an edge-colored bipartite graph of order $n\geq 6$. If $d^{c}(v)\geq \frac{(\sqrt{5}-1)n}{4}+1$ for every vertex $v\in V(G)$, then $G$ contains a heterochromatic $C_4$.
\end{theorem}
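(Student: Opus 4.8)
The plan is to argue by contradiction. Let $G$ be bipartite with parts $X$ and $Y$, write $n=|X|+|Y|$, set $t=\frac{\sqrt5-1}{4}$, and assume $d^c(v)\ge tn+1$ for every vertex $v$ while $G$ contains no heterochromatic $C_4$. The single most useful preliminary observation is arithmetic: substituting $s=2t$ turns the threshold value into the root of $s^2+s-1=0$, i.e.\ $2t=\frac{\sqrt5-1}{2}$ is exactly the golden ratio. This is not a coincidence: in a bipartite graph the color degree of a vertex is capped by the size of the opposite part, so the effective density parameter is measured against $\approx n/2$ rather than $n$, and the factor $\frac14$ (instead of the $\frac12$ that appears in the non-bipartite results above) reflects precisely this halving. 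Accordingly I would organize the whole count so that the classical golden-ratio inequality $s^2+s-1\le 0$ with $s=2t$ emerges at the end, producing the required contradiction.

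First I would fix a vertex $u$, say $u\in X$, and extract a \emph{rainbow star} at $u$: picking one edge of each color incident to $u$ yields neighbors $y_1,\dots,y_m\in Y$ joined to $u$ by edges of pairwise distinct colors $c_1,\dots,c_m$, where $m=d^c(u)\ge tn+1$. Because a $C_4$ in a bipartite graph is nothing but a pair $y_i,y_j$ together with a common neighbor $x\neq u$, every candidate heterochromatic $C_4$ through $u$ has the form $u\,y_i\,x\,y_j\,u$ with color multiset $\{c_i,c_j,C(xy_i),C(xy_j)\}$. The crux is to translate ``no heterochromatic $C_4$'' into a strong color-coincidence statement: for each $x\in X\setminus\{u\}$ adjacent to two star-neighbors $y_i,y_j$, the cycle $u\,y_i\,x\,y_j\,u$ is heterochromatic \emph{unless} one of the cheap obstructions occurs, namely $C(xy_i)=C(xy_j)$ or one of $C(xy_i),C(xy_j)$ lies in the forbidden set $\{c_i,c_j\}$. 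I would use this to show that, after discarding the boundedly many edges spoiled by forbidden star-colors, the surviving edges from each common neighbor $x$ to $\{y_1,\dots,y_m\}$ are essentially monochromatic.

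With that structure in hand I would double count the colored edges between $\{y_1,\dots,y_m\}$ and $X\setminus\{u\}$ in two ways. From the $y_i$ side, the hypothesis $d^c(y_i)\ge tn+1$ forces each $y_i$ to send many distinctly colored edges into $X\setminus\{u\}$, giving a lower bound of order $m\cdot tn$. From the $x$ side, the ``essentially one color per $x$'' phenomenon, together with $|X|\le n$ and $|Y|\le n$ and the $O(m)$ spoiled edges, gives an upper bound that is quadratic in the densities. Trading $|X|$ against $|Y|$ (the step where the cap of color degrees by the opposite part is exploited, and where the additive $+1$ and the hypothesis $n\ge 6$ are needed to absorb the lower-order terms) should collapse the two estimates into $s^2+s-1\le 0$ with $s=2t$, i.e.\ $t\le\frac{\sqrt5-1}{4}$, contradicting $d^c(v)\ge tn+1$.

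I expect the main obstacle to be the tight bookkeeping of the two effects that blur the clean monochromatic picture: the edges whose colors fall into the forbidden set $\{c_i,c_j\}$, and the common neighbors $x$ that are only partially colored across the star. Keeping these error terms small enough that the final estimate reproduces \emph{exactly} the quadratic $4t^2+2t-1$, rather than a weaker constant, is the delicate point, and it is presumably why an explicit small-order condition such as $n\ge 6$ is imposed, so that the additive corrections stay dominated by the main quadratic term.
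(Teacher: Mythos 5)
The paper does not prove Theorem~\ref{t5} at all: it is quoted from Zhu's thesis \cite{Zhu} and used as a black box in the proof of Theorem~\ref{t6}, so there is no in-paper argument to compare against. Judged on its own, your proposal has a genuine gap at its central structural step. You claim that the absence of a heterochromatic $C_4$ through $u$ forces the edges from each common neighbour $x$ to the rainbow star $\{y_1,\dots,y_m\}$ to be ``essentially monochromatic'' after discarding ``boundedly many'' edges spoiled by the forbidden colors $c_i$. That is false. Consider a vertex $x$ adjacent to all of $y_1,\dots,y_m$ with $C(xy_i)=c_i$ for every $i$: then every cycle $uy_ixy_ju$ has color multiset $\{c_i,c_i,c_j,c_j\}$ and is not heterochromatic, yet the edges at $x$ into the star are completely rainbow and \emph{none} of them can be discarded as a bounded-size exception --- the number of such ``aligned'' edges can be as large as $m$ itself. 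This aligned configuration is precisely the extremal-type obstruction that makes the golden-ratio threshold appear in the proofs of Theorems~\ref{t4} and~\ref{t5}, and your dichotomy (monochromatic at $x$, plus $O(m)$ spoiled edges) simply does not cover it. Consequently the upper bound in your double count, which rests on ``one color per $x$,'' collapses.

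Beyond that, the double count itself is only gestured at: you assert that the two estimates ``should collapse'' into $4t^2+2t-1\le 0$ without exhibiting the inequality, and the lower bound mixes a count of \emph{colors} at each $y_i$ with a count of \emph{edges} from the $x$-side, which are not directly comparable without an additional argument (distinct colors at $y_i$ need not correspond to distinct vertices $x$, and the same color may recur at different $y_i$'s). Your arithmetic observation that $s=2t$ satisfies $s^2+s-1=0$ is correct and does explain why the constant $\frac{\sqrt5-1}{4}$ is natural for the bipartite setting, but the argument as proposed does not reach that inequality. To repair it you would need a case analysis separating, for each $x$ and each $i$, whether $C(xy_i)=c_i$ or not, and a count that charges the aligned edges against the color degrees of the $y_i$'s --- which is essentially what the actual proofs in \cite{Wang_Li_Zhu_Liu} and \cite{Zhu} do.
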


In this note we prove the following result which is an extension of Theorem \ref{t1}.

\begin{theorem}\label{t6}
Let $G$ be an edge-colored graph of order $n\geq 60$ such that $|CN(u)\cup CN(v)|\geq n-1$ for every pair of vertices $u$ and $v$ of $G$. Then $G$ contains a heterochromatic $C_4$.
\end{theorem}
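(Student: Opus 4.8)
The plan is to argue by contradiction: assume $G$ has no heterochromatic $C_4$ and derive a contradiction for $n\ge 60$. Since $G$ satisfies the hypothesis of Theorem~\ref{t1}, it contains a heterochromatic $C_3$ or $C_4$; as we have excluded the latter, $G$ contains a heterochromatic triangle, say on $x,y,z$ with $C(xy)=1$, $C(yz)=2$, $C(zx)=3$. Write $V'=V(G)\setminus\{x,y,z\}$, and call a color \emph{new} if it lies outside $\{1,2,3\}$. The whole argument then amounts to showing that the color neighbourhood union condition forces so many new colors on the edges meeting the triangle that one of them must close up into a heterochromatic $C_4$.

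Two structural pillars drive the proof. First, the union condition yields a color-degree bound: if two vertices $u,v$ had $d^{c}(u),d^{c}(v)<(n-1)/2$, then $|CN(u)\cup CN(v)|<n-1$, so at most one vertex of $G$ is \emph{light} (has color degree below $(n-1)/2$). After relabelling we may assume $x,y$ are not light and that all but at most one vertex of $V'$ is heavy. Second, the absence of a heterochromatic $C_4$ forbids certain colorings near the triangle. The basic \emph{forbidden configurations} are: (a) for every $w\in V'$, all edges from $w$ to $\{x,y,z\}$ carrying a new color must carry the \emph{same} new color, since distinct new colors on $wx$ and $wy$ would make $x\,w\,y\,z\,x$ (using the triangle edges $yz,zx$) a heterochromatic $C_4$; and (b) if $w,w'\in V'$ are joined to $x$ by two distinct new colors and to $y$ by two distinct colors from $\{1,2,3\}$, then $x\,w\,y\,w'\,x$ is a heterochromatic $C_4$. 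Analogous statements hold under permutation of $x,y,z$.

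Next comes the counting. Applying the hypothesis to the pair $\{x,y\}$ and noting $CN(x)\cup CN(y)\subseteq\{1,2,3\}\cup S_x\cup S_y$, where $S_x$ (resp.\ $S_y$) is the set of new colors on edges from $x$ (resp.\ $y$) to $V'$, we obtain $|S_x\cup S_y|\ge n-4$, hence, say, $|S_x|\ge (n-4)/2$. Choosing one witness for each color of $S_x$ yields a set $W\subseteq V'$ of at least $(n-4)/2$ vertices joined to $x$ by pairwise distinct new colors, all but one of them heavy. Configuration (b), and its analogue for $z$, shows that the edges from $W$ to $y$ using colors in $\{1,2,3\}$ all use a single color, and likewise for $z$, while configuration (a) controls the new-colored edges from $W$ to $y,z$. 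Thus the triangle can ``absorb'' only very few of the heterochromatic paths $w\,x\,w'$ with $w,w'\in W$, so most of them must instead be closed through a common neighbour of $w,w'$ inside $V'$. Here the heaviness of $W$ is used: with $|W|$ of order $n/2$ there are quadratically many such paths but only linearly many colors available on the two closing edges, so for $n\ge 60$ some pair $w,w'$ admits a common neighbour $t\neq x$ with $C(wt),C(tw')$ distinct and avoiding both $C(xw)$ and $C(xw')$. Then $x\,w\,t\,w'\,x$ is a heterochromatic $C_4$, the desired contradiction.

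The main obstacle is exactly this last closing step. Ruling out the ``triangle-assisted'' heterochromatic $C_4$'s via configurations (a) and (b) is routine, but controlling how the colors on the edges \emph{inside} $V'$ might be arranged so as to block every closing path $w\,t\,w'$ is delicate: one must show that the heaviness of the witnesses, together with the scarcity of colors on the two closing edges relative to the quadratically many candidate paths, cannot be evaded simultaneously. This is where a careful double count—and the bookkeeping for the single possible light vertex—forces the threshold, with $n\ge 60$ being simply the point at which the resulting inequality becomes strict. The permutation symmetry among $x,y,z$ and among the pairs $\{x,y\},\{y,z\},\{z,x\}$ keeps this case analysis uniform.
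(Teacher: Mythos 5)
Your overall strategy (extract a heterochromatic triangle via Theorem~\ref{t1}, classify the colors of edges meeting it, and then close a heterochromatic $C_4$ through $x$ by a counting argument) is genuinely different from the paper's proof, which instead splits on $\delta^{c}(G)$, analyses the rainbow neighbourhood of a minimum-color-degree vertex, and in the hard case passes to a spanning bipartite subgraph (Lemma~\ref{t7}) in order to invoke Zhu's bipartite theorem (Theorem~\ref{t5}). The preparatory steps you carry out are correct: at most one vertex can have color degree below $(n-1)/2$; distinct new colors from one vertex $w$ to two triangle vertices do close a heterochromatic $C_4$ through the third; and $|S_x\cup S_y|\ge n-4$ does follow from the hypothesis, giving a witness set $W$ with $|W|\ge (n-4)/2$ joined to $x$ in pairwise distinct new colors. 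But the proof stops exactly where the theorem becomes nontrivial.

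The closing step is a genuine gap, not a routine verification. First, a pair $w,w'\in W$ need not have \emph{any} common neighbour $t$ in $V'$: the hypothesis only forces $d(w)+d(w')\ge n-1$, which is compatible with $N(w)\cap N(w')=\emptyset$ (e.g.\ when $w,w'$ lie on opposite sides of a near-bipartite structure); this bipartite-like situation is precisely the hard case, which is why the paper must bipartize and appeal to Theorem~\ref{t5}, whose own proof is a delicate extremal argument with an irrational threshold $\frac{(\sqrt{5}-1)n}{4}+1$. Second, the asserted dichotomy ``quadratically many paths $w\,t\,w'$ versus only linearly many colors on the closing edges'' is not a valid count: the total number of colors in $G$ can be as large as $\binom{n}{2}$, and the obstruction for a given pair $(w,w')$ is local ($C(wt)=C(tw')$, or one of them hits $\{C(xw),C(xw')\}$), so no bound of the form ``few colors, many paths'' is available without substantial additional structure. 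You acknowledge that this step is ``delicate'' and defer it to ``a careful double count,'' but that double count is the entire content of the theorem; as written, the proposal does not establish it, and there is no indication of how the threshold $n\ge 60$ would emerge from your inequalities. The heaviness of the vertices of $W$ is announced but never actually used to produce the common neighbour $t$ or to control the colors on $wt$ and $tw'$.
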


We can not provide examples to show that the restriction $n\geq 60$ can be reduced and the condition $|CN(u)\cup CN(v)|\geq n-1$ can be weakened in Theorem \ref{t6}. However, we can show that the lower bound of $|CN(u)\cup CN(v)|$ must be more than $\sqrt{2n-3}$ in order to guarantee the existence of a heterochromatic $C_4$.

A finite projective plane $\mathcal P$ is a pair of sets $(P,L)$ where $P$ is a set of points and $L$ is a set of lines such that\\
($i$)  any two points in $P$ lie on only one line in $L$;\\
($ii$) any two lines in $L$ meet in only one point in $P$; and\\
($iii$) there are four points in $P$ no three of which lie on a line in $L$.\\
It is proved that (see \cite{Bondy_Murty_5}, Exercise 1.3.13) for any integer $t$, there exists a finite projective plane $(P,L)$ such that each point in $P$ lies on $t+1$ lines in $L$, each line in $L$ contains $t+1$ points in $P$, and $|P|=|L|=t^2+t+1$.

Let $\mathcal P$ be a finite projective plane with $|P|=|L|=t$. We define the {\em incidence graph} $I(\mathcal P)$ of $\mathcal P$ as the bipartite graph with bipartition $(P,L)$ and edges $pl$ when $p\in P$ lies on $l\in L$ in $\mathcal P$. Let $u,v$ be two vertices of $I(\mathcal P)$. If $u,v\in P$ or $u,v\in L$, then $|N(u)\cup N(v)|=2t+1$. If one of them (say $u$) is in $P$ and the other ($v$) is in $L$, then $|N(u)\cup N(v)|=2t+1$ when $u$ lies on $v$ and $|N(u)\cup N(v)|=2t+2$ otherwise. Therefore, we have
$|N(u)\cup N(v)|\geq 2t+1= \sqrt{2n-3}$. On the other hand, it is easy to see that $I(\mathcal P)$ contains no cycles of length 4. Otherwise, there exist two points in $P$ which lie on two lines, contradicting to the definition of finite projective planes.

If we assign an edge coloring to $I(\mathcal P)$ such that every two of its edges have different colors, then we can get the following proposition.

\begin{prop}
Let $t$ be an integer. Then there exists an edge-colored graph $G$ of order $n=2(t^2+t+1)$ such that $|CN(u)\cup CN(v)|\geq\sqrt{2n-3}$ for every pair of vertices $u$ and $v$ of $G$ and $G$ contains no heterochromatic cycles of length 4.
\end{prop}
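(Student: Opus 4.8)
The plan is to exhibit the required graph explicitly, so that essentially all the work reduces to two clean observations. First I would fix a finite projective plane $\mathcal P=(P,L)$ of order $t$, guaranteed by the exercise cited in the excerpt, with $|P|=|L|=t^2+t+1$ and with every point (resp.\ line) incident to exactly $t+1$ lines (resp.\ points), and I would set $G=I(\mathcal P)$. Then $G$ is a bipartite graph on $n=2(t^2+t+1)$ vertices that is regular of degree $t+1$. Note that $2n-3=(2t+1)^2$, so the target inequality reads $|CN(u)\cup CN(v)|\ge 2t+1$.

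Next I would equip $G$ with a \emph{rainbow} edge-coloring, i.e.\ a coloring $C$ under which distinct edges receive distinct colors. With this choice, the absence of a heterochromatic $C_4$ is immediate: a heterochromatic $C_4$ is in particular a cycle of length $4$, and the excerpt already records that $I(\mathcal P)$ has no such cycle (a $4$-cycle $p_1l_1p_2l_2p_1$ would force the two points $p_1,p_2$ to lie on the two common lines $l_1,l_2$, contradicting the plane axioms, and symmetrically for two vertices in $L$). Since $G$ contains no $C_4$ at all, it certainly contains no heterochromatic one.

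It then remains to verify the color-neighborhood condition, and here the point of the rainbow coloring is that it puts the colors in bijection with the edges. Consequently a color lies in both $CN(u)$ and $CN(v)$ exactly when its unique edge is incident to both $u$ and $v$, that is, exactly when $uv\in E(G)$. Thus $|CN(u)\cap CN(v)|$ equals $1$ if $u$ and $v$ are adjacent and $0$ otherwise, and since the rainbow coloring gives $d^{c}(u)=d(u)=t+1$ for every vertex $u$, I would conclude that $|CN(u)\cup CN(v)|=2(t+1)-1=2t+1$ when $uv\in E(G)$ and $|CN(u)\cup CN(v)|=2(t+1)=2t+2$ otherwise. In either case $|CN(u)\cup CN(v)|\ge 2t+1=\sqrt{2n-3}$, as required.

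There is no genuine obstacle in this argument: once the rainbow coloring is chosen, the $C_4$-freeness is inherited verbatim from the projective-plane structure already discussed, and the color-degree estimate is a one-line inclusion–exclusion. The only point requiring care is the bookkeeping of the single shared color in the adjacent case, which is precisely what makes the bound tight at $2t+1$ rather than $2t+2$ and matches the value $\sqrt{2n-3}$ exactly.
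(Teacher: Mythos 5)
Your proposal is correct and follows essentially the same route as the paper: the rainbow-colored incidence graph of a projective plane of order $t$, with $C_4$-freeness inherited from the plane axioms. If anything, your verification of the color condition is slightly cleaner than the paper's, since you count colors directly via the bijection with edges (getting $2t+1$ or $2t+2$ by inclusion--exclusion on the single possible shared edge) rather than passing through $|N(u)\cup N(v)|$ as the paper does.
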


\section{Proof of Theorem \ref{t6}}
We first give a lemma, which will be used in the proof of Theorem \ref{t6}.

\begin{lemma}\label{t7}
Let $G$ be an edge-colored graph. Then $G$ contains a spanning bipartite subgraph $H$ such that $2d_{H}^{c}(v)+3d_{H}(v)\geq d_G^{c}(v)+d_G(v)$ for every vertex $v\in V(H)$.
\end{lemma}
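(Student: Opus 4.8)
The goal is to find a spanning bipartite subgraph $H$ of an edge-colored graph $G$ satisfying $2d_H^c(v) + 3d_H(v) \geq d_G^c(v) + d_G(v)$ for every vertex $v$. Since a bipartite subgraph is determined by a bipartition $(X, Y)$ of $V(G)$ together with the set of edges crossing between $X$ and $Y$, the natural approach is the probabilistic/extremal method of choosing a "good" bipartition. The plan is to argue via a local-switching (extremality) argument: take a bipartition that maximizes some weight functional measuring the desirability of the crossing edges, and then show that optimality forces the required inequality at every vertex.

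The key difficulty is that the inequality mixes two incompatible quantities: the ordinary degree $d_H(v)$ and the color degree $d_H^c(v)$. A standard max-cut argument maximizing the number of crossing edges would control $d_H(v)$ but say nothing about colors; conversely, maximizing crossing colors ignores edge counts. So the main obstacle will be choosing a single weight functional whose local optimality simultaneously yields information about both $d_H(v)$ and $d_H^c(v)$. The plan is to define a weight that, for each crossing edge, charges something related to whether its color is \emph{new} at its endpoints in $H$, so that moving a vertex $v$ from one side to the other changes the functional in a way that couples $d_H(v)$, $d_H^c(v)$, and their within-side analogues $d_G(v) - d_H(v)$ and $d_G^c(v) - d_H^c(v)$.

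Concretely, first I would fix, for the current bipartition, a set $H$ of crossing edges that is \emph{color-minimal} in the sense that $H$ retains exactly one edge of each color present in the crossing cut (so $d_H(v)$ and $d_H^c(v)$ track each other and the color-degree bookkeeping is clean), and then analyze the effect of relocating a single vertex $v$. Relocating $v$ swaps its crossing edges with its non-crossing edges; writing the local change in the chosen functional and using the fact that the current bipartition is optimal gives an inequality at $v$. The heart of the argument is to choose the functional so that this optimality inequality, after accounting for how many distinct colors appear among $v$'s edges on each side, rearranges exactly into $2d_H^c(v) + 3d_H(v) \geq d_G^c(v) + d_G(v)$. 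The constants $2$ and $3$ strongly suggest that each crossing edge should be weighted to contribute to both the degree and color-degree terms, while each non-crossing edge contributes only to the right-hand side; the factor of $3$ on $d_H(v)$ reflects that a crossing edge at $v$ simultaneously helps the count twice (once from being present, once from carrying a color) plus a surplus from the switching gain.

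I would then verify the inequality holds at every vertex simultaneously, which follows because the chosen bipartition is globally optimal and hence locally optimal at each $v$ independently; no single vertex can be moved to improve the functional, so the derived inequality is valid for all $v \in V(H)$ at once. The final step is to confirm that the color-minimal subgraph $H$ is genuinely spanning and bipartite with the claimed bipartition, which is immediate from the construction. I expect the bookkeeping that separates "new colors" from "repeated colors" across the cut to be the most delicate point, since an edge whose color already appears on the same side must be handled carefully so as not to overcount color contributions; getting the weight function to absorb this cleanly is where the precise constants $2$ and $3$ will be pinned down.
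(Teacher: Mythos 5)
Your high-level strategy (choose a bipartition extremal for some weight functional, then derive the vertex inequality from the impossibility of improving it by relocating a single vertex) is exactly the paper's approach. But the proposal has a genuine gap: the weight functional is never written down, and you explicitly defer "the heart of the argument" to the step of choosing it so that the optimality inequality "rearranges exactly" into the claim. That choice is the entire content of the lemma. The paper takes $f(H)=|E(H)|+\sum_{v}d^{c}_{H}(v)$ over all spanning bipartite subgraphs $H$; maximality forces $H$ to contain every cut edge, and moving a vertex $w$ across the cut changes $|E(H)|$ by $d_G(w)-2d_H(w)$ and changes $\sum_v d^c_H(v)$ by at least $\bigl(d^c_G(w)-2d^c_H(w)\bigr)-d_H(w)$ (the $-d_H(w)$ accounting for colors lost at $w$'s old cut-neighbours), which is precisely where the coefficients $2$ and $3$ come from. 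Without exhibiting such a functional and carrying out this computation, the proposal is a plan rather than a proof.

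Moreover, your proposed device of passing to a \emph{color-minimal} set of crossing edges (one edge per color in the cut) is a misstep rather than a simplification. For such an $H$ every vertex satisfies $d_H(v)=d^{c}_{H}(v)$, so the target inequality becomes $5d_H(v)\geq d^{c}_G(v)+d_G(v)$, a strictly stronger statement; and a vertex all of whose cut edges carry colors already represented elsewhere in the cut can end up with $d_H(v)=0$ or very small (e.g.\ in a properly edge-colored $K_{m,m}$, a color-minimal transversal of the natural cut has only $m$ edges spread over $2m$ vertices). The local-switching argument gives no control over which representatives survive, so this version of the inequality cannot be extracted from it. The paper avoids the issue entirely by keeping \emph{all} cut edges in $H$ and letting the two summands of $f$ track $d_H(v)$ and $d^{c}_{H}(v)$ separately.
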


\begin{proof}
We choose a spanning bipartite subgraph $H$ of $G$ such that $f(H)=|E(H)|+\sum_{v\in V(H)}d_{H}^{c}(v)$ is as large as possible and show that $2d_{H}^{c}(v)+3d_{H}(v)\geq d_G^{c}(v)+d_G(v)$ for every vertex $v\in V(H)$.

Assume that the bipartition of $H$ is $(X,Y)$. Then any edge $xy$ of $G$ with $x\in X$ and $y\in Y$ is also an edge of $H$. Otherwise, we have $f(H+xy)> f(H)$, contradicting to the choice of $H$. It can be seen that $d^{c}_{H}(x)=|CN_{G[Y]}(x)|$ for $x\in X$ and $d^{c}_{H}(y)=|CN_{G[X]}(y)|$ for $y\in Y$.

Suppose that there exists a vertex $w\in V(H)$ such that
\begin{align}
2d_{H}^{c}(w)+3d_{H}(w)<d_G^{c}(w)+d_G(w).
\end{align}
Without loss of generality, we assume $w\in X$. Let $H'$ be the spanning bipartite subgraph of $G$ with bipartition $(X\backslash \{w\}, Y\cup \{w\})$ and edge set $E(H)\cup \{wx: x\in X\setminus \{w\}\}\setminus \{wy: y\in Y\}$. Then, we have
\begin{align}
|E(H')|-|E(H)|=(d_{G}(w)-d_{H}(w))-d_{H}(w)=d_{G}(w)-2d_{H}(w).
\end{align}
On the other hand, noting that
\begin{align*}
d^{c}_{H'}(w)-d^{c}_{H}(w)
&=|CN_{G[X]}(w)|-|CN_{G[Y]}(w)|\\
&=|CN_{G\setminus G[Y]}(w)|-|CN_{G[Y]}(w)|\\
&\geq |CN_{G}(w)|-2|CN_{G[Y]}(w)|\\
&=d^{c}_{G}(w)-2d^{c}_{H}(w)
\end{align*} and
\begin{align*}
\sum_{v\in V\setminus\{w\}}(d^{c}_{H'}(v)-d^{c}_{H}(v))
&=\sum_{v\in X\setminus\{w\}}(d^{c}_{H'}(v)-d^{c}_{H}(v))+\sum_{v\in Y}(d^{c}_{H'}(v)-d^{c}_{H}(v))\\
&\geq \sum_{v\in Y}(d^{c}_{H'}(v)-d^{c}_{H}(v))\\
&= \sum_{v\in Y} (|CN_{G[X\backslash \{w\}]}(v)|-|CN_{G[X]}(v)|)\\
&\geq -\sum_{v\in Y} |CN_{G[\{w\}]}(v)|\\
&=-d_{H}(w),
\end{align*}
we have
\begin{align*}
\sum_{v\in V}d^{c}_{H'}(v)-\sum_{v\in V}d^{c}_{H}(v)
&=\sum_{v\in V\setminus\{w\}}(d^{c}_{H'}(v)-d^{c}_{H}(v))+(d^{c}_{H'}(w)-d^{c}_{H}(w))\\
&\geq (d^{c}_{G}(w)-2d^{c}_{H}(w))-d_{H}(w).
\end{align*}
That is,
\begin{align}
\sum_{v\in V}d^{c}_{H'}(v)-\sum_{v\in V}d^{c}_{H}(v)\geq  d^{c}_{G}(w)-2d^{c}_{H}(w)-d_{H}(w).
\end{align}
By (1), (2) and (3), we get
$$
f(H')-f(H)\geq d_{G}(w)+d^{c}_{G}(w)-2d^{c}_{H}(w)-3d_{H}(w)>0,
$$
which contradicts to the choice of $H$.

The proof is complete.
\end{proof}

\noindent{}
{\bf {Proof of Theorem \ref{t6}}}

We denote $\delta^{c}(G)=\min\{d^{c}(v):v\in V(G)\}$ and distinguish two cases.
\begin{case}\label{t8}
$\delta^{c}(G)=n-1$.
\end{case}

Assume that $G$ contains no heterochromatic cycles of length 4. Then we have $d^{c}(v)=d(v)=n-1$ for every vertex $v\in V(G)$. Hence $G$ is complete and $C(vx)\neq C(vy)$ for every vertex $v\in V(G)$, where $x,y\in N(v)$.

\begin{claim}\label{t9}
Let $x_1,x_2,x_3,x_4$ be four vertices of $V(G)$. If $C(x_1x_2)\neq C(x_3x_4)$, then $C(x_2x_3)=C(x_1x_4)$.
\end{claim}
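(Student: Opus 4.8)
The plan is to apply the no-heterochromatic-$C_4$ hypothesis to a single $4$-cycle through $x_1,x_2,x_3,x_4$. Two facts from Case \ref{t8} supply everything needed: $G$ is complete, so the four (necessarily distinct) vertices span a $4$-cycle; and, as already noted, $C(vx)\neq C(vy)$ for distinct neighbors $x,y$ of any $v$, i.e.\ edges sharing an endpoint receive distinct colors.

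First I would take the cycle $x_1x_2x_3x_4x_1$, whose edges $x_1x_2,\ x_2x_3,\ x_3x_4,\ x_4x_1$ fall into two opposite (vertex-disjoint) pairs, $\{x_1x_2,x_3x_4\}$ and $\{x_2x_3,x_4x_1\}$, while every pair of \emph{adjacent} edges meets at one of the $x_i$. By the properness recalled above, adjacent edges already carry different colors, so no two adjacent edges of this cycle can repeat a color.

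Next, since $G$ has no heterochromatic $C_4$, the cycle $x_1x_2x_3x_4x_1$ is not heterochromatic and hence two of its edges share a color. Having excluded all adjacent pairs, this coincidence must lie inside one of the two opposite pairs: either $C(x_1x_2)=C(x_3x_4)$ or $C(x_2x_3)=C(x_4x_1)$. The hypothesis $C(x_1x_2)\neq C(x_3x_4)$ rules out the first, leaving $C(x_2x_3)=C(x_4x_1)=C(x_1x_4)$, as claimed.

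I expect no real obstacle here: the argument is just a short case analysis on which two edges of a single $4$-cycle share a color, with properness collapsing the options to the two opposite pairs. The only point to watch is the adjacency pattern of the cycle, so that the excluded pair $\{x_1x_2,x_3x_4\}$ and the forced pair $\{x_2x_3,x_4x_1\}$ are correctly identified and matched against the hypothesis and the conclusion.
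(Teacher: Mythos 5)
Your proposal is correct and is essentially the paper's own argument, just phrased in the forward direction rather than by contradiction: both reduce to the observation that properness kills all adjacent-pair color repetitions on the cycle $x_1x_2x_3x_4x_1$, so the failure to be heterochromatic must come from an opposite pair, and the hypothesis forces it to be $\{x_2x_3,x_4x_1\}$. No gaps.
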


\begin{proof}
Suppose that $C(x_2x_3)\neq C(x_1x_4)$. From the above discussion, we know that $C(x_1x_2)\neq C(x_3x_4)$, $C(x_1x_2)\neq C(x_2x_3)$, $C(x_2x_3)\neq C(x_3x_4)$, $C(x_3x_4)\neq C(x_4x_1)$ and $C(x_4x_1)\neq C(x_1x_2)$. With $C(x_2x_3)\neq C(x_1x_4)$, we see that $x_1x_2x_3x_4x_1$ is a heterochromatic $C_4$, a contradiction.
\end{proof}

Let $y_1,y_2,y_3,y_4,y_5$ be five vertices of $V(G)$. Without loss of generality, we assume $C(y_5y_k)=k$ for $k\in\{1,2,3,4\}$. Since $C(y_1y_2)\neq C(y_1y_5) $ and $C(y_1y_2)\neq C(y_2y_5)$, we have $C(y_1y_2)\neq 1$ and $C(y_1y_2)\neq 2$. Hence $C(y_1y_2)\geq 3$.

Suppose that $C(y_1y_2)=3$.  Since $C(y_1y_2)\neq C(y_4y_5)$, it follows from Claim \ref{t9} that $C(y_2y_4)=C(y_5y_1)$. With $C(y_3y_5)\neq C(y_1y_5)$, we have $C(y_2y_4)\neq C(y_3y_5)$. Then by Claim \ref{t9}, we obtain that $C(y_3y_4)=C(y_2y_5)$. At the same time, by $C(y_1y_2)\neq C(y_4y_5)$ and Claim \ref{t9}, we have $C(y_1y_4)=C(y_2y_5)$. Hence $C(y_1y_4)=C(y_3y_4)$, a contradiction.

Suppose that $C(y_1y_2)=4$. Since $C(y_1y_2)\neq C(y_3y_5)$, it follows from Claim \ref{t9} that $C(y_2y_3)=C(y_1y_5)$. With $C(y_4y_5)\neq C(y_1y_5)$, we get $C(y_2y_3)\neq C(y_4y_5)$. Then by Claim \ref{t9}, we obtain that $C(y_3y_4)=C(y_2y_5)$. Similarly, with $C(y_1y_2)\neq C(y_2y_5)$, we have $C(y_1y_2)\neq C(y_3y_4)$. Then by Claim \ref{t9}, we have $C(y_1y_4)=C(y_2y_3)$. Hence, $C(y_1y_4)=C(y_1y_5)$, a contradiction.

Suppose that $C(y_1y_2)\geq 5$. Since $C(y_1y_2)\neq C(y_4y_5)$, it follows from Claim \ref{t9} that $C(y_2y_4)=C(y_1y_5)$. At the same time, by $C(y_1y_2)\neq C(y_3y_5)$ and Claim \ref{t9}, we get $C(y_2y_3)=C(y_1y_5)$. Hence $C(y_2y_4)=C(y_2y_3)$, a contradiction.

The proof of Case \ref{t8} is complete.

\begin{case}
$\delta^{c}(G)\leq n-2$.
\end{case}

Let $w$ be a vertex with $d^{c}(w)=\delta^{c}(G)$ and denote $\delta^{c}(G)=k$. Let $T$ be a subset of $N(w)$ such that $|T|=k$ and $C(wx)\neq C(wy)$ for every two vertices $x,y\in T$. Without loss of generality, set $T(w)=\{x_1,x_2,\ldots,x_k\}$ and assume that $C(wx_i)=i$ for $i\in \{1,2,\ldots,k\}$. Set $G_1=G[T\cup \{w\}]$ and $G_2=G[V(G)\setminus V(G_1)]$. Since $|V(G_1)|=k+1\leq n-1$, we have $V(G_2)\neq \emptyset$.

\begin{subcase}
There exits a vertex $z\in V(G_2)$ such that $|CN_{G_1}(z)\setminus CN(w)|\geq 2$.
\end{subcase}

By the choice of $T$, if $v$ is a neighbor of $z$ such that $C(vz)\in CN_{G_1}(z)\setminus CN(w)$, then $v\neq w$. Since $|CN_{G_1}(z)\setminus CN(w)|\geq 2$, we can choose $x_s,x_t\in T$ with $\{C(x_sz),C(x_tz)\} \subseteq CN_{G_1}(z)\setminus CN(w)$. Obviously, $wx_szx_tw$ is a heterochromatic $C_4$.

\begin{subcase}
$|CN_{G_1}(v)\setminus CN(w)|\leq 1$ for every vertex $v\in V(G_2)$.
\end{subcase}

\begin{claim}
$|CN_{G_2}(v)|=|V(G_2)|-1$ for every vertex $v\in V(G_2)$.
\end{claim}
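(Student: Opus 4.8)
The plan is to apply the hypothesis $|CN(u)\cup CN(v)|\geq n-1$ to the single pair consisting of $w$ and the given vertex $v\in V(G_2)$, and to exploit the Subcase hypothesis to control the $G_1$-part of $CN(v)$. I would begin by recording the two elementary facts that drive everything: since $d^{c}(w)=\delta^{c}(G)=k$ and $CN(w)=\{1,2,\ldots,k\}$, we have $|CN(w)|=k$; and since $|V(G_1)|=k+1$, the remaining graph has $|V(G_2)|=n-k-1$ vertices. In particular any vertex of $G_2$ has at most $|V(G_2)|-1=n-k-2$ neighbors inside $G_2$, so the trivial upper bound $|CN_{G_2}(v)|\leq n-k-2$ holds; thus it suffices to prove the matching lower bound $|CN_{G_2}(v)|\geq n-k-2$.

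For the lower bound, I would split the colors at $v$ according to where the corresponding edges go, writing $CN(v)=CN_{G_1}(v)\cup CN_{G_2}(v)$. The Subcase hypothesis $|CN_{G_1}(v)\setminus CN(w)|\leq 1$ says that $CN_{G_1}(v)$ is contained in $CN(w)$ together with at most one extra color, so that $|CN(w)\cup CN_{G_1}(v)|\leq k+1$. Consequently
$$|CN(w)\cup CN(v)|=|CN(w)\cup CN_{G_1}(v)\cup CN_{G_2}(v)|\leq (k+1)+|CN_{G_2}(v)|.$$
Applying $|CN(w)\cup CN(v)|\geq n-1$ then yields $|CN_{G_2}(v)|\geq n-k-2$, which combined with the upper bound gives $|CN_{G_2}(v)|=n-k-2=|V(G_2)|-1$, as claimed.

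I do not expect a genuine obstacle here: the statement is essentially an unwinding of the union condition for the pair $(w,v)$. The only point that requires care is to pair $w$ with a vertex of $G_2$ rather than pairing two vertices of $G_2$ with each other, since it is precisely the smallness of $CN(w)$ together with $|CN_{G_1}(v)\setminus CN(w)|\leq 1$ that forces almost all of the $n-1$ colors in $CN(w)\cup CN(v)$ to come from edges of $v$ lying inside $G_2$. Note that neither the assumption that $G$ contains no heterochromatic $C_4$ nor any bound of the form $k\geq (n-1)/2$ is needed for this particular claim; these will presumably enter afterwards, once the claim has exhibited $G_2$ as a complete graph in which the edges at every vertex receive pairwise distinct colors, i.e.\ a configuration of exactly the type already handled in Case \ref{t8}.
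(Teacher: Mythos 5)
Your proof is correct and follows essentially the same route as the paper's: both bound $|CN(w)\cup CN_{G_1}(v)|\leq k+1$ using the subcase hypothesis, deduce from $|CN(w)\cup CN(v)|\geq n-1$ that at least $n-k-2$ colors at $v$ must come from edges inside $G_2$, and match this against the trivial upper bound $|CN_{G_2}(v)|\leq d_{G_2}(v)\leq |V(G_2)|-1=n-k-2$. No gaps.
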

\begin{proof}
First, we have $|CN_{G_1}(v)\setminus CN(w)|\leq 1$. If follows from $|CN(w)|=k$ that $|CN(w)\cup CN_{G_1}(v)|\leq k+1$. Note that $|CN(w)\cup CN(v)|\geq n-1$, we have $|CN(v)\backslash CN_{G_1}(v)|\geq n-k-2$. On the other hand, we have $|CN(v)\backslash CN_{G_1}(v)|\leq |CN_{G_2}(v)|\leq d_{G_2}(v)\leq |V(G_2)|-1=n-k-2$. Thus, $|CN_{G_2}(v)|=|V(G_2)|-1$, where $|V(G_2)|=n-k-1$.
\end{proof}

\begin{subsubcase}
$k\leq n-6$.
\end{subsubcase}
Note that $|V(G_2)|=n-k-1\geq 5$. Then as in Case \ref{t8}, we can prove that there exists a heterochromatic $C_4$ in $G_2$, which is also a heterochromatic $C_4$ in $G$.
\begin{subsubcase}
$k\geq n-5$.
\end{subsubcase}
By Lemma \ref{t7}, there is a spanning bipartite subgraph $H$ such that
\begin{align}
2d_{H}^{c}(v)+3d_{H}(v)\geq d_G^{c}(v)+d_G(v)
\end{align}
for every vertex $v\in V(H)$. It is not difficult to see that
\begin{align}
d_{H}(v)-d_{H}^{c}(v)\leq d_{G}(v)-d_{G}^{c}(v)
\end{align}
and
\begin{align}
d_{G}(v)-d_{G}^{c}(v)\leq d_{G}(v)-\delta^{c}(G)\leq (n-1)-(n-5)=4.
\end{align}
Together with (5) and (6), we have
\begin{align}
d_{H}^{c}(v)-d_{H}(v)\geq -4.
\end{align}
Then, combining (4) with (7), we obtain
$$
d^{c}_{H}(v)\geq \frac{1}{5}(d_G^{c}(v)+d_G(v)-12)\geq \frac{2n-22}{5}>\frac{(\sqrt{5}-1)n}{4}+1
$$
when $n\geq 60$. By Theorem \ref{t5}, there is a heterochromatic $C_4$ in $H$, which is also a heterochromatic $C_4$ in $G$.

The proof is complete.
{\hfill$\Box$}

\section{Remarks}
Our proof of Theorem \ref{t6} mainly relies on Lemma \ref{t7} and Theorem \ref{t5}.

Lemma \ref{t7} was motivated by the following result due to Erd\"{o}s \cite{Erdos}.

\begin{theorem}[Erd\"{o}s \cite{Erdos}]
Let $G$ be a graph. Then $G$ contains a spanning bipartite subgraph $H$ such that $d_{H}(v)\geq \frac{1}{2}d_{G}(v)$
for all $v\in V(H)$.
\end{theorem}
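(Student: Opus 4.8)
The plan is to use the classical extremal (max-cut) argument, which is precisely the prototype that Lemma~\ref{t7} generalizes. First I would consider the collection of all partitions of $V(G)$ into two parts $V(G)=X\cup Y$, and for each such partition let $H$ be the spanning bipartite subgraph whose edges are exactly those of $G$ with one endpoint in $X$ and the other in $Y$. Among all these partitions I would pick one for which the number of crossing edges $|E(H)|$ is as large as possible; since there are only finitely many partitions, such a maximizer exists.

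Next I would show that this extremal $H$ already satisfies the required inequality at every vertex simultaneously. Suppose to the contrary that some vertex $v$ violates it, say $v\in X$ with $d_H(v)<\frac{1}{2}d_G(v)$. Then more than half of the edges of $G$ incident with $v$ go to the part containing $v$, so $v$ has more neighbours in $X$ than in $Y$. Consider the partition obtained by moving $v$ from $X$ to $Y$ while keeping every other vertex fixed. Under this single-vertex switch, the edges from $v$ into $Y$ (previously crossing) stop crossing, while the edges from $v$ into $X\setminus\{v\}$ (previously non-crossing) start crossing; the status of every edge not incident with $v$ is unchanged. Hence the cut size changes by exactly $d_G(v)-2d_H(v)$.

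The key computation is that this change is strictly positive: the assumption $d_H(v)<\frac{1}{2}d_G(v)$ gives $d_G(v)-2d_H(v)>0$, so the new partition has strictly more crossing edges than $H$, contradicting the maximality of $|E(H)|$. Therefore no vertex can violate the bound, and $d_H(v)\geq\frac{1}{2}d_G(v)$ holds for all $v\in V(H)$, as claimed.

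I do not expect a genuine obstacle here; the argument is short and self-correcting because the single extremal choice of $H$ enforces the degree condition at all vertices at once, rather than vertex by vertex. The only point requiring care is the bookkeeping of how a one-vertex switch changes the cut size, together with the observation that a local (single-vertex) improvement already suffices to contradict global maximality. This is exactly the mechanism reused in Lemma~\ref{t7}, where the potential $|E(H)|$ is replaced by $f(H)=|E(H)|+\sum_{v}d_H^{c}(v)$ in order to accommodate the color degrees.
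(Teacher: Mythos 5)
Your proof is correct and is exactly the classical max-cut argument; the paper cites this theorem to Erd\H{o}s without proof, but the mechanism you describe (maximize $|E(H)|$ over all bipartitions, then derive a contradiction from a single-vertex switch) is precisely the one the paper employs, with the potential $|E(H)|$ replaced by $f(H)=|E(H)|+\sum_{v}d_H^{c}(v)$, in its proof of Lemma~\ref{t7}. Your bookkeeping of the cut-size change $d_G(v)-2d_H(v)$ matches the computation in equation (2) of that proof, so nothing further is needed.
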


We have the following problem.

\begin{problem}\label{t9}
Let $G$ be an edge-colored graph. Does $G$ contain a spanning bipartite subgraph $H$ such that $d^{c}_{H}(v)\geq \frac{1}{2}d^{c}_{G}(v)$?
\end{problem}

Zhu \cite{Wang_Li_Zhu_Liu} pointed out that if the answer to the following conjecture is positive, then it would result in an improvement to Theorem \ref{t5}.

\begin{conjecture}[\cite{Wang}]\label{t10}
Let $D$ be a directed bipartite graph with bipartition $(A,B)$. If $d^{+}(u)>\frac{|B|}{3}$ for $u\in A$ and $d^{+}(v)\geq\frac{|A|}{3}$ for $v\in B$, or $d^{+}(u)\geq\frac{|B|}{3}$ for $u\in A$ and $d^{+}(v)>\frac{|A|}{3}$ for $v\in B$, then there exists a directed $C_4$ in $D$.
\end{conjecture}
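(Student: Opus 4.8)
The plan is to argue by contradiction: assume $D$ has no directed $C_4$ and derive a contradiction with the out-degree hypotheses. Write $a=|A|$, $b=|B|$, and for a vertex $x$ let $N^{+}(x)$ and $N^{-}(x)$ denote its out- and in-neighbourhoods. The first step is to record the right reformulation of the goal. Since $D$ is bipartite, every directed $C_4$ has the form $u_1v_1u_2v_2u_1$ with $u_1,u_2\in A$ distinct and $v_1,v_2\in B$ distinct; equivalently, $u_1$ and $u_2$ are mutually reachable by directed paths of length two through two \emph{distinct} intermediate vertices of $B$. Encoding the arcs by the incidence matrices $F$ (from $A$ to $B$) and $G$ (from $B$ to $A$), and setting $M=FG$, the entry $M_{u_1u_2}=|N^{+}(u_1)\cap N^{-}(u_2)|$ counts length-two paths from $u_1$ to $u_2$. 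Then $\mathrm{tr}(M^{2})=\sum_{u_1,u_2}M_{u_1u_2}M_{u_2u_1}$ counts closed directed walks of length four, and the number of genuine $C_4$'s equals this trace minus the degenerate terms coming from $u_1=u_2$ (the diagonal $\sum_u M_{uu}^{2}$) and from a repeated middle vertex $v_1=v_2$ (pairs of digons $u_1\leftrightarrow v\leftrightarrow u_2$). So it suffices to show that $\sum_{u_1\neq u_2}M_{u_1u_2}M_{u_2u_1}$ strictly exceeds the number of such double-digon configurations.

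The second step is to feed in the degree bounds. For fixed $u_1\in A$ the row sum is $\sum_{u_2}M_{u_1u_2}=\sum_{v\in N^{+}(u_1)}d^{+}(v)\ge |N^{+}(u_1)|\cdot\frac{a}{3}\ge\frac{ab}{9}$, using $d^{+}(u_1)\ge b/3$ and $d^{+}(v)\ge a/3$. Summing gives $\sum_{u_1,u_2}M_{u_1u_2}\ge a^{2}b/9$, so on average $M$ is dense; the task is to turn this density into a \emph{reciprocal} pair $M_{u_1u_2},M_{u_2u_1}>0$ realised by distinct middles. A complementary local description is useful here: the absence of a $C_4$ through $u$ forces $N^{+}(u')\cap N^{-}(u)=\emptyset$ for every $u'\neq u$ that is two-step reachable from $u$, and since $|N^{+}(u')|\ge b/3$ this already yields $d^{-}(u)\le 2b/3$ for every such $u$. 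Iterating this localisation and combining it with the global row-sum bound is the mechanism by which one hopes to manufacture the required reciprocal pair. The strict inequality on one side of the bipartition is exactly what should be exploited to rule out the boundary behaviour, since the natural extremal object---the $\mathbb{Z}_3$ blow-up in which $A=A_0\cup A_1\cup A_2$, $B=B_0\cup B_1\cup B_2$, with all arcs $A_i\to B_i$ and $B_i\to A_{i+1}$---is $C_4$-free and has every out-degree equal to exactly $b/3$ or $a/3$.

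The main obstacle is that the threshold $1/3$ is sharp, as the blow-up above shows, so this is a bipartite instance of the Caccetta--H\"{a}ggkvist phenomenon for the directed-triangle problem (here each index class of $B$ plays the role of a colour class). A direct application of Cauchy--Schwarz or convexity to $\mathrm{tr}(M^{2})$ controls only the diagonal and the symmetric part of $M$, whereas the quantity that matters, $\sum_{u_1\neq u_2}M_{u_1u_2}M_{u_2u_1}$, is governed by the interaction between out-degrees (which we control) and in-degrees (which we do not); this asymmetry is precisely what makes the naive count fall short of $1/3$ and instead deliver only a larger constant. I would therefore expect the honest route to the full conjecture to require the heavier machinery developed for the directed-triangle problem---flag-algebra and semidefinite computations in the spirit of Hladk\'{y}, Kr\'{a}\v{l} and Norin, or the extremal counting arguments of Shen---adapted to the bipartite setting, rather than a short self-contained double count. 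A realistic intermediate target, provable by the counting above, is the weakening of $1/3$ to this larger constant, with the sharp $1/3$ and the precise role of the one-sided strict inequality left as the genuinely hard core.
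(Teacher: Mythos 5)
There is no proof in the paper to compare against: the statement you were given is quoted verbatim as an \emph{open conjecture} (attributed to Wang), and the paper uses it purely conditionally --- it remarks that \emph{if} the conjecture holds, then the bound $n\geq 60$ in Theorem 6 could be reduced. So the relevant question is whether your proposal actually proves the conjecture, and by your own admission it does not. The setup you give is sound: the identification of bipartite directed $C_4$'s with reciprocal pairs $M_{u_1u_2},M_{u_2u_1}>0$ realised by distinct middle vertices (modulo the digon degeneracy, which you correctly flag), the row-sum bound $\sum_{u_2}M_{u_1u_2}\geq ab/9$, and the $\mathbb{Z}_3$ blow-up showing that the constant $1/3$ is tight are all correct observations. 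But the final paragraph of your proposal is precisely the gap: the argument never converts density of $M$ into a reciprocal pair, and you explicitly defer the sharp threshold --- including the role of the one-sided strict inequality, which is exactly what distinguishes the conjecture from the extremal blow-up --- to unspecified flag-algebra or Caccetta--H\"{a}ggkvist-type machinery. A proof that ends by naming the tools one hopes would finish it is not a proof.

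Two smaller points. First, your local claim that $C_4$-freeness forces $N^{+}(u')\cap N^{-}(u)=\emptyset$ for every $u'$ two-step reachable from $u$ is slightly off: if $v$ is the unique intermediate vertex witnessing reachability, one can only conclude $N^{+}(u')\cap N^{-}(u)\subseteq\{v\}$, so the resulting bound is $d^{-}(u)\leq 2b/3+1$; this does not affect your qualitative conclusion but would matter in any sharp counting. Second, your diagnosis that the problem is a bipartite instance of the Caccetta--H\"{a}ggkvist phenomenon, and that a naive $\mathrm{tr}(M^2)$ count cannot reach $1/3$ because it conflates out-degree information (controlled) with in-degree information (uncontrolled), is a fair and accurate assessment of why the statement remains a conjecture --- which is consistent with the paper itself leaving it open. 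In short: your proposal is an honest feasibility analysis, correct in its partial claims, but it contains no complete argument, and none exists in the paper either.
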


From our proof of Theorem \ref{t6}, we know that if the answer to Problem \ref{t9} or Conjecture \ref{t10} is positive, then the restriction $n\geq 60$ in Theorem \ref{t6} can be reduced.

\end{document}